\newcommand{\minimize}[2]{\ensuremath{\underset{\substack{{#1}}}%
{\mathrm{minimize}}\;\;#2 }}
\newcommand{\Frac}[2]{\displaystyle{\frac{#1}{#2}}} 
\newcommand{\scal}[2]{{\left\langle{{#1}\mid{#2}}\right\rangle}}
\newcommand{\menge}[2]{\big\{{#1}~\big |~{#2}\big\}} 
\newcommand{\cyc}{\ensuremath{\mathsf{cyc}}}
\newcommand{\HH}{\ensuremath{{\mathcal H}}}
\newcommand{\HHH}{\ensuremath{\boldsymbol{\mathcal H}}}
\newcommand{\emp}{\ensuremath{{\varnothing}}}
\newcommand{\RR}{\ensuremath{\mathbb{R}}}
\newcommand{\RP}{\ensuremath{\left[0,+\infty\right[}}
\newcommand{\RPP}{\ensuremath{\left]0,+\infty\right[}}
\newcommand{\NN}{\ensuremath{\mathbb N}}
\newcommand{\cart}{\ensuremath{\raisebox{-0.5mm}{\mbox{\LARGE{$\times$}}}}\!}
\newcommand{\pinf}{\ensuremath{{+\infty}}}
\newcommand{\Fix}{\ensuremath{\operatorname{Fix}}}
\def\proof{\noindent{\it Proof}. \ignorespaces}
\def\endproof{\vbox{\hrule height0.6pt\hbox{\vrule height1.3ex%
width0.6pt\hskip0.8ex\vrule width0.6pt}\hrule height0.6pt}}
\newtheorem{theorem}{Theorem}[section]
\newtheorem{corollary}[theorem]{Corollary}
\theoremstyle{plain}{\theorembodyfont{\rmfamily}%
}
\theoremstyle{plain}{\theorembodyfont{\rmfamily}%
}
\theoremstyle{plain}{\theorembodyfont{\rmfamily}%
}
\theoremstyle{plain}{\theorembodyfont{\rmfamily}%
\newtheorem{remark}[theorem]{Remark}}
\theoremstyle{plain}{\theorembodyfont{\rmfamily}%
\newtheorem{definition}[theorem]{Definition}}
\theoremstyle{plain}{\theorembodyfont{\rmfamily}%
\newtheorem{question}[theorem]{Question}}
\numberwithin{equation}{section}
\begin{document}
\title{\sffamily There is no variational characterization of the
cycles\\ in the method of periodic projections}
\author{J.-B. Baillon,$^1$ P. L. Combettes,$^{2}$ and 
R. Cominetti$^3$
\\[5mm]
\small
$\!^1$Universit\'e Paris 1 Panth\'eon-Sorbonne\\
\small SAMM -- EA 4543\\
\small 75013 Paris, France 
(Jean-Bernard.Baillon@univ-paris1.fr)\\[4mm]
\small $\!^2$UPMC Universit\'e Paris 06\\
\small Laboratoire Jacques-Louis Lions -- UMR 7598\\
\small 75005 Paris, France (plc@math.jussieu.fr)
\\[5mm]
\small
\small $\!^3$Universidad de Chile\\
\small Departamento de Ingenier\'{\i}a Industrial\\
\small Santiago, Chile (rccc@dii.uchile.cl)
}
\date{~}
\maketitle

\vskip 8mm

\begin{abstract} 
\noindent
The method of periodic projections consists in iterating projections 
onto $m$ closed convex subsets of a Hilbert space according to a
periodic sweeping strategy. In the presence of $m\geq 3$ sets, a 
long-standing question going back to the 1960s is whether the 
limit cycles obtained by such a process can be characterized as the 
minimizers of a certain functional. In this paper we
answer this question in the negative. Projection algorithms that 
minimize smooth convex functions over a product of convex sets are 
also discussed.
\end{abstract} 

\newpage

\section{Introduction}

Throughout this paper $\HH$ is a real Hilbert space with scalar
product $\scal{\cdot}{\cdot}$ and associated norm $\|\cdot\|$.
Let $C_1$ and $C_2$ be closed vector subspaces of $\HH$, and let 
$P_1$ and $P_2$ be their respective projection operators. The 
method of alternating projections for finding the projection of a 
point $x_0\in\HH$ onto $C_1\cap C_2$ is governed by the iterations 
\begin{equation}
\label{e:pocs1}
(\forall n\in\NN)\quad 
\begin{array}{l}
\left\lfloor
\begin{array}{l}
x_{2n+1}=P_2x_{2n}\\
x_{2n+2}= P_1x_{2n+1}.
\end{array}
\right.\\[2mm]
\end{array}
\end{equation}
This basic process, which can be traced back to Schwarz' alternating
method in partial differential equations \cite{Schw70}, has found 
many applications in mathematics and in the applied sciences; see 
\cite{Deut92} and the references therein. The strong convergence of
the sequence $(x_{n})_{n\in\NN}$ produced by \eqref{e:pocs1} to the
projection of $x_0$ onto $C_1\cap C_2$ was established by von 
Neumann in 1933 \cite{Vonn49}. The extension of \eqref{e:pocs1} to 
the case when $C_1$ and $C_2$ are general nonempty closed convex 
sets was considered in \cite{Che59a,Lev66b}. Thus, it was shown in 
\cite{Che59a} that, if $C_1$ is compact,
the sequences $(x_{2n})_{n\in\NN}$ and $(x_{2n+1})_{n\in\NN}$ 
produced by \eqref{e:pocs1} converge strongly to points 
$\overline{y}_1$ and $\overline{y}_2$, respectively, that constitute
a cycle, i.e.,
\begin{equation}
\label{e:2010-12-21b}
\overline{y}_1=P_1\overline{y}_2
\quad\text{and}\quad 
\overline{y}_2=P_2\overline{y}_1,
\end{equation}
or, equivalently, that solve the variational problem 
(see Figure~\ref{fig:1})
\begin{equation} 
\label{e:best2} 
\minimize{y_1\in C_1,\,y_2\in C_2}{\|y_1-y_2\|}.
\end{equation} 
Furthermore, it was shown in \cite{Lev66b} that, if $C_1$ is merely
bounded, the same conclusion holds provided strong convergence is 
replaced by weak convergence. As was proved only recently 
\cite{Hund04}, strong convergence can fail. 

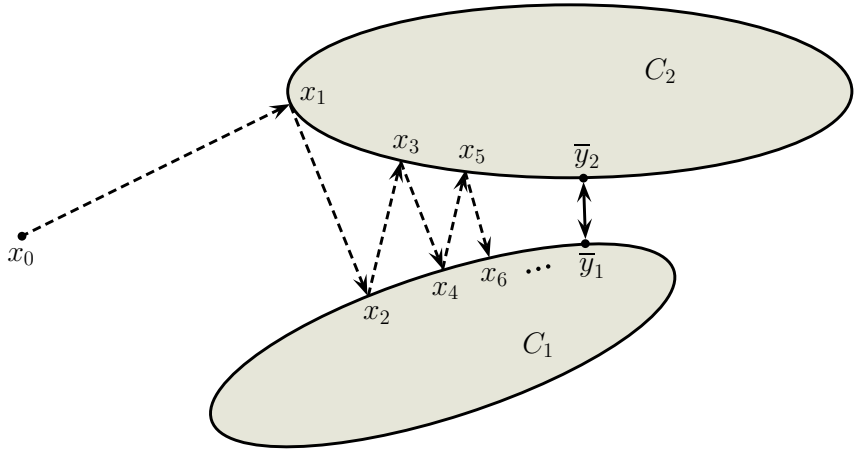
\begin{figure}[h!tb]
\begin{center}
\scalebox{0.65}{
\begin{pspicture}(-1,-4.85)(16.24,6.1)
\definecolor{color96b}{rgb}{0.90,0.90,0.85}
\rput{18.0}(-0.18953674,-2.8316424)%
{\psellipse[linewidth=0.06,dimen=outer,fillstyle=solid,%
fillcolor=color96b](8.844375,-2.0141652)(5.0,1.5)}
\psellipse[linewidth=0.06,dimen=outer,fillstyle=solid,%
fillcolor=color96b](11.444375,3.185835)(5.8,1.8)
\psline[linewidth=0.06cm,linestyle=dashed,linecolor=black,%
arrowsize=0.18cm 2.0,arrowlength=1.4,arrowinset=0.4]{->}%
(0.204375,0.20583488)(5.724375,2.925835)
\psline[linewidth=0.06cm,linestyle=dashed,linecolor=black,%
arrowsize=0.18cm 2.0,arrowlength=1.4,arrowinset=0.4]{->}%
(5.7243,2.9058)(7.304375,-0.994165)
\psline[linewidth=0.06cm,linestyle=dashed,linecolor=black,%
arrowsize=0.18cm 2.0,arrowlength=1.4,%
arrowinset=0.4]{->}(7.324375,-0.9941651)(7.984375,1.7458348)
\psline[linewidth=0.06cm,linestyle=dashed,linecolor=black,%
arrowsize=0.18cm 2.0,arrowlength=1.4,%
arrowinset=0.4]{->}(7.984375,1.7458348)(8.844375,-0.45416513)
\psline[linewidth=0.06cm,linestyle=dashed,linecolor=black,%
arrowsize=0.18cm 2.0,arrowlength=1.4,%
arrowinset=0.4]{->}(8.844375,-0.45416513)(9.3,1.55)
\psline[linewidth=0.06cm,linestyle=dashed,linecolor=black,%
arrowsize=0.18cm 2.0,arrowlength=1.4,%
arrowinset=0.4]{->}(9.3,1.55)(9.8,-0.22)
\psline[linewidth=0.06cm,arrowsize=0.18cm 2.0,arrowlength=1.4,%
arrowinset=0.4]{<->}(11.764375,0.16)(11.724375,1.33)
\psdots[dotsize=0.18](0.24,0.22)
\rput(10.8,-2.0){\LARGE $C_1$}
\rput(13.3,3.6){\LARGE $C_2$}
\rput(0.22,-0.20){\LARGE $x_0$}
\rput(7.5,-1.4){\LARGE $x_2$}
\rput(6.2,3.1){\LARGE $x_1$}
\rput(8.1,2.1){\LARGE $x_3$}
\rput(9.45,1.9){\LARGE $x_5$}
\rput(8.9,-0.89){\LARGE $x_4$}
\rput(9.9,-0.62){\LARGE $x_6$}
\rput(11.9,-0.37){\LARGE $\overline{y}_1$}
\rput(11.8,1.87){\LARGE $\overline{y}_2$}
\psdots[dotsize=0.09](10.6,-0.50)
\psdots[dotsize=0.09](10.8,-0.45)
\psdots[dotsize=0.09](11.0,-0.40)
\psdots[dotsize=0.18](11.764375,0.06583487)
\psdots[dotsize=0.18](11.724375,1.4058349)
\end{pspicture} 
}
\caption{In the case of $m=2$ sets, the method of alternating
projections produces a cycle $(\overline{y}_1,\overline{y}_2)$ that 
achieves the minimal distance between the two sets.}
\label{fig:1}
\end{center}
\end{figure}

Extending the above results to $m\geq 3$ nonempty closed convex 
subsets $(C_i)_{1\leq i\leq m}$ of $\HH$ poses interesting 
challenges. For 
instance, there are many strategies for scheduling the order in
which the sets are projected onto. The simplest one corresponds to a
periodic activation of the sets, say
\begin{equation}
\label{e:pocs2}
(\forall n\in\NN)\quad 
\begin{array}{l}
\left\lfloor
\begin{array}{ll}
x_{mn+1}&=P_mx_{mn}\\
x_{mn+2}&=P_{m-1}x_{mn+1}\\
&\;\vdots\\
x_{mn+m}&= P_1x_{mn+m-1},
\end{array}
\right.\\[2mm]
\end{array}
\end{equation}
where $(P_i)_{1\leq i\leq m}$ denote the respective projection
operators onto the sets $(C_i)_{1\leq i\leq m}$.
In the case of closed vector subspaces, it was shown in 1962 that 
the sequence $(x_n)_{n\in\NN}$ thus generated converges strongly to
the projection of $x_0$ onto $\bigcap_{i=1}^mC_i$ \cite{Halp62}. 
This provides a precise extension of the von Neumann result, 
which corresponds to $m=2$.
Interestingly, however, for nonperiodic sweeping strategies with
closed vector subspaces, only weak convergence has been established
in general \cite{Amem65} and, since 1965, it has remained an open
problem whether strong convergence holds (see \cite{Bail99} for 
the state-of-the-art on this conjecture). 
Another long-standing open problem is the
one that we address in this paper and which concerns the asymptotic 
behavior of the periodic projection algorithm \eqref{e:pocs2} for
general closed convex sets. It was shown in 1967 \cite{Gubi67} 
(see also \cite[Section~7]{Opti04}, \cite{Erem08}, and 
\cite[Th\'eor\`eme~5.5.2]{Mart72} for extensions of this result
to more general operators) that, if one of the sets is bounded, 
the sequences
$(x_{mn})_{n\in\NN}$, $(x_{mn+1})_{n\in\NN}$, 
\ldots, $(x_{mn+m-1})_{n\in\NN}$ converge 
weakly to points $\overline{y}_1$, $\overline{y}_m$, \ldots, 
$\overline{y}_2$, respectively, that constitute a cycle, i.e. 
(see Figure~\ref{fig:2}),
\begin{equation}
\label{e:2010-12-21c}
\overline{y}_1=P_1\overline{y}_2,\;\ldots,\;
\overline{y}_{m-1}=P_{m-1}\overline{y}_{m},\;
\overline{y}_m=P_m\overline{y}_{1}.
\end{equation}
However, it remains an open question whether, as in the case of 
$m=2$ sets, the cycles can be characterized as the solutions to a 
variational problem. We formally formulate this problem as follows.

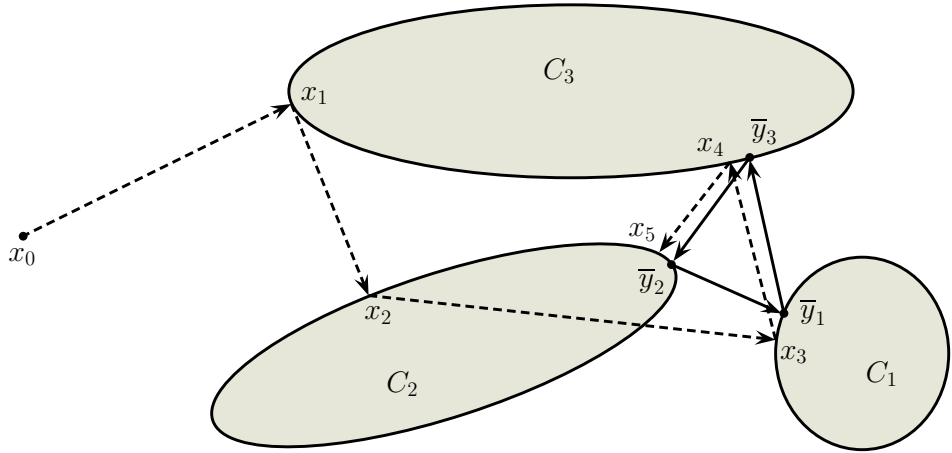
\begin{figure}[h!tb]
\begin{center}
\scalebox{0.65}{
\begin{pspicture}(-1,-4.85)(18.24,6.1)
\definecolor{color96b}{rgb}{0.90,0.90,0.85}
\rput{18.0}(-0.18953674,-2.8316424)%
{\psellipse[linewidth=0.06,dimen=outer,fillstyle=solid,%
fillcolor=color96b](8.844375,-2.0141652)(5.0,1.5)}
\psellipse[linewidth=0.06,dimen=outer,fillstyle=solid,%
fillcolor=color96b](11.444375,3.185835)(5.8,1.8)
\psellipse[linewidth=0.06,dimen=outer,fillstyle=solid,%
fillcolor=color96b](17.4,-2.18)(1.8,2.0)
\psline[linewidth=0.06cm,linestyle=dashed,linecolor=black,%
arrowsize=0.18cm 2.0,arrowlength=1.4,arrowinset=0.4]{->}%
(0.204375,0.20583488)(5.7243,2.926)
\psline[linewidth=0.06cm,linestyle=dashed,linecolor=black,%
arrowsize=0.18cm 2.0,arrowlength=1.4,arrowinset=0.4]{->}%
(5.7243,2.926)(7.304375,-0.994165)
\psline[linewidth=0.06cm,linestyle=dashed,linecolor=black,%
arrowsize=0.18cm 2.0,arrowlength=1.4,%
arrowinset=0.4]{->}(7.304375,-0.994165)(15.63,-1.9)
\psline[linewidth=0.06cm,linestyle=dashed,linecolor=black,%
arrowsize=0.18cm 2.0,arrowlength=1.4,%
arrowinset=0.4]{->}(15.63,-1.9)(14.70,1.74)
\psline[linewidth=0.06cm,linestyle=dashed,linecolor=black,%
arrowsize=0.18cm 2.0,arrowlength=1.4,%
arrowinset=0.4]{->}(14.70,1.74)(13.25,-0.11)
\psline[linewidth=0.06cm,arrowsize=0.18cm 2.0,arrowlength=1.4,%
arrowinset=0.4]{->}(15.1,1.83)(13.54,-0.29)
\psline[linewidth=0.06cm,arrowsize=0.18cm 2.0,arrowlength=1.4,%
arrowinset=0.4]{->}(13.5,-0.36)(15.75,-1.36)
\psline[linewidth=0.06cm,arrowsize=0.18cm 2.0,arrowlength=1.4,%
arrowinset=0.4]{->}(15.8,-1.36)(15.1,1.78)
\psdots[dotsize=0.18](0.24,0.22)
\rput(8.0,-2.8){\LARGE $C_2$}
\rput(11.2,3.6){\LARGE $C_3$}
\rput(17.8,-2.6){\LARGE $C_1$}
\rput(0.22,-0.20){\LARGE $x_0$}
\rput(7.5,-1.4){\LARGE $x_2$}
\rput(6.2,3.1){\LARGE $x_1$}
\rput(16.0,-2.2){\LARGE $x_3$}
\rput(14.3,2.04){\LARGE $x_4$}
\rput(12.9,0.3){\LARGE $x_5$}
\rput(13.1,-0.7){\LARGE $\overline{y}_2$}
\rput(15.4,2.37){\LARGE $\overline{y}_3$}
\rput(16.4,-1.33){\LARGE $\overline{y}_1$}
\psdots[dotsize=0.18](15.1,1.83)
\psdots[dotsize=0.18](13.5,-0.36)
\psdots[dotsize=0.18](15.8,-1.36)
\end{pspicture} 
}
\caption{Example with $m=3$ sets: the method of periodic
projections initialized at $x_0$ produces the cycle 
$(\overline{y}_1,\overline{y}_2,\overline{y}_3)$.}
\label{fig:2}
\end{center}
\end{figure}

\begin{definition}
\label{d:cycle}
Let $m$ be an integer at least equal to $2$ and let 
$(C_1,\ldots,C_m)$ be an ordered family of nonempty 
closed convex subsets of $\HH$ with associated projection operators 
$(P_1,\ldots,P_m)$. The set of cycles associated with 
$(C_1,\ldots,C_m)$ is
\begin{equation}
\label{e:2010-12-22a}
\cyc(C_1,\ldots,C_m)=
\menge{(\overline{y}_1,\ldots,\overline{y}_m)\in\HH^m}{
\overline{y}_1=P_1\overline{y}_2,\;\ldots,\;
\overline{y}_{m-1}=P_{m-1}\overline{y}_{m},\;
\overline{y}_m=P_m\overline{y}_{1}}.
\end{equation}
\end{definition}

\begin{question}
\label{prob:1}
Let $m$ be an integer at least equal to $3$. Does there exist a 
function $\Phi\colon\HH^m\to\RR$ such that, for every ordered 
family of nonempty closed convex subsets 
$(C_1,\ldots,C_m)$ of $\HH$, $\cyc(C_1,\ldots,C_m)$ is the set 
of solutions to the variational problem 
\begin{equation} 
\label{e:best3} 
\minimize{y_1\in C_1,\ldots,\,y_m\in C_m}{\Phi(y_1,\ldots,y_m)}\:?
\end{equation} 
\end{question}

Let us note that the motivations behind Question~\ref{prob:1} are 
not purely theoretical but also quite practical. Indeed, the 
variational properties of the cycles when $m=2$ have led to 
fruitful applications, e.g., \cite{Gold85,Merc80,Noba95,Pesq96}. 
Since the method of periodic projections \eqref{e:pocs2} is used 
in scenarios involving $m\geq 3$ possibly nonintersecting sets 
\cite{Proc93}, it is therefore important to understand the 
properties of its limit cycles and, in particular, whether they 
are optimal in some sense. Since the seminal work \cite{Gubi67}
in 1967 that first established the existence of cycles, little 
progress has been made towards this goal beyond the observation that
simple candidates such as 
$\Phi\colon(y_1,\ldots,y_m)\mapsto\|y_1-y_2\|+\cdots
+\|y_{m-1}-y_m\|+\|y_m-y_1\|$ fail 
\cite{Baus94,Baus97,Sign94,Kosm87}.
The main result of this paper is that the answer to
Question~\ref{prob:1} is actually negative. This result will be
established in Section~\ref{sec:2}. Finally, in 
Section~\ref{sec:3}, projection algorithms that are
pertinent to extensions of \eqref{e:best2} to $m\geq 3$ sets 
will be discussed.

\section{A negative answer to Question~\ref{prob:1}}
\label{sec:2}

We denote by $S(x;\rho)$ the sphere of center $x\in\HH$ and radius
$\rho\in\RP$, and by $P_C$ the projection operator onto a nonempty 
closed convex set $C\subset\HH$; in particular, $P_C 0$ is the
element of minimal norm in $C$.

Our main result hinges on the following variational property, 
which is of interest in its own right.

\begin{figure}[h!tb]
\begin{center}
\scalebox{0.60}{
\begin{pspicture}(-1,-7.0)(13.0,7.1)
\pscircle[linewidth=0.05,dimen=outer](6.0,0.0){6.0}
\psline[linewidth=0.04cm](6.0,0.0)(10.47,-4.0)
\psarc[linewidth=0.04](6.0,0.0){2.2}{-42.0}{-12.0}
\psline[linewidth=0.04cm](6.0,0.0)(11.873,-1.228)
\psline[linewidth=0.04cm](6.0,0.0)(11.7,1.873)
\psline[linewidth=0.04cm](6.0,0.0)(10.0,4.472)
\psline[linewidth=0.04cm](6.0,0.0)(7.228,5.873)
\psline[linewidth=0.04cm](6.0,0.0)(4.127,5.700)
\psline[linewidth=0.04cm](10.47,-4.0)(11.07,-1.07)
\psline[linewidth=0.04cm](11.07,-1.07)(10.27,1.4)
\psline[linewidth=0.04cm](10.27,1.4)(8.59,2.91)
\psline[linewidth=0.04cm](8.59,2.91)(6.7,3.3)
\psline[linewidth=0.04cm](6.7,3.3)(5.1,2.77)
\rput(11.7,-4.3){\LARGE $y=x_{n,0}$}
\rput(5.00,1.77){\LARGE $x$}
\rput(4.4,2.9){\LARGE $x_{n,n}$}
\rput(8.3,3.4){\LARGE $x_{n,k}$}
\rput(10.6,2.0){\LARGE $x_{n,k-1}$}
\rput(8.6,-1.3){\LARGE $\alpha/n$}
\rput(-0.5,-4.3){\LARGE $V$}
\psdots[dotsize=0.18](5.41,1.8)
\psdots[dotsize=0.18](10.47,-4.0)
\psdots[dotsize=0.18](11.07,-1.07)
\psdots[dotsize=0.18](10.27,1.4)
\psdots[dotsize=0.18](8.59,2.91)
\psdots[dotsize=0.18](6.7,3.3)
\psdots[dotsize=0.18](5.1,2.77)
\end{pspicture}
}
\end{center}
\caption{A polygonal spiral from $y=x_{n,0}$ to $x_{n,n}$ in $V$.}
\label{fig:3}
\end{figure}
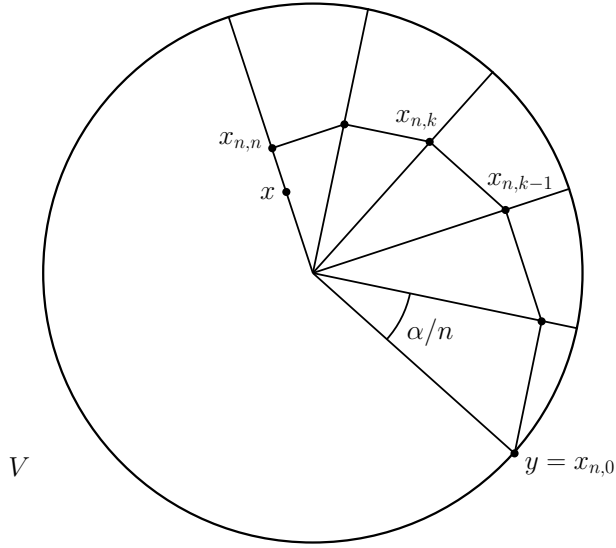

\begin{theorem}
\label{t:03} 
Suppose that $\dim\HH\geq 2$ and let $\varphi\colon\HH\to\RR$ be 
such that its infimum on every nonempty closed convex set 
$C\subset\HH$ is attained at $P_C 0$. Then the following hold.
\begin{enumerate}
\item
\label{t:03i} 
$\varphi$ is radially increasing, i.e.,
\begin{equation}
\label{e:mono}
(\forall x\in\HH)(\forall y\in\HH)\quad
\|x\|<\|y\|\quad\Rightarrow\quad\varphi(x)\leq\varphi(y).
\end{equation}
\item
\label{t:03ii} 
Suppose that, for every nonempty closed convex set $C\subset\HH$, 
$P_C 0$ is the unique minimizer of $\varphi$ on $C$.
Then $\varphi$ is strictly radially increasing, i.e.,
\begin{equation}
\label{e:monos}
(\forall x\in\HH)(\forall y\in\HH)\quad
\|x\|<\|y\|\quad\Rightarrow\quad\varphi(x)<\varphi(y).
\end{equation}
\item
\label{t:03iii} 
Except for at most countably many values of $\rho\in\RP$,
$\varphi$ is constant on $S(0;\rho)$.
\end{enumerate}
\end{theorem}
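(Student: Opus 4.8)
The plan is to deduce all three assertions from a single device: a finite polygonal spiral joining $x$ to $y$ along whose vertices $\varphi$ is nondecreasing (Figure~\ref{fig:3}). First I would isolate the elementary mechanism. The hypothesis says that for every nonempty closed convex $C\subset\HH$ one has $\varphi(P_C0)\leq\varphi(c)$ for all $c\in C$, with equality only at $P_C0$ under the extra assumption of part~(ii). Applying this to a segment $C=[z,z']$, whose minimal-norm element is $z$ exactly when $\langle z\mid z'-z\rangle\geq0$, i.e.\ $\langle z\mid z'\rangle\geq\|z\|^2$, gives the implication: if $z\neq0$ and $\langle z\mid z'\rangle\geq\|z\|^2$, then $\varphi(z)\leq\varphi(z')$, the inequality being strict if moreover $z'\neq z$ and the hypothesis of part~(ii) holds. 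Applying the hypothesis to a closed ball $\overline{B}(0;\rho)$ centered at the origin, whose minimal-norm element is $0$, also gives $\varphi(0)=\min_{\HH}\varphi$, so $\varphi\geq\varphi(0)$ on $\HH$.

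For \eqref{e:mono} and \eqref{e:monos}, fix $x,y$ with $\|x\|<\|y\|$; the case $x=0$ is covered by $\varphi\geq\varphi(0)$, so assume $x\neq0$ and put $r=\|x\|$ and $s=\|y\|$. Using $\dim\HH\geq2$, choose orthonormal vectors $u,v\in\HH$ with $x=ru$ and $y=s(\cos\theta\,u+\sin\theta\,v)$ for some $\theta\in[0,\pi]$. For an integer $n\geq1$, set $\rho_i=r(s/r)^{i/n}$ and
\[
z_i=\rho_i\bigl(\cos(i\theta/n)\,u+\sin(i\theta/n)\,v\bigr),\qquad i=0,\dots,n,
\]
so that $z_0=x$, $z_n=y$, $\|z_i\|=\rho_i$, and $\langle z_i\mid z_{i+1}\rangle=\rho_i\rho_{i+1}\cos(\theta/n)$. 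Since $(\cos(\theta/n))^n\to1>r/s$ as $n\to\infty$, for all sufficiently large $n$ one has $\cos(\theta/n)>0$ and $(s/r)^{1/n}\cos(\theta/n)\geq1$, which is exactly $\langle z_i\mid z_{i+1}\rangle\geq\|z_i\|^2$ for every $i$. Fixing such an $n$, the implication above gives $\varphi(x)=\varphi(z_0)\leq\varphi(z_1)\leq\cdots\leq\varphi(z_n)=\varphi(y)$, which is \eqref{e:mono}; and since $\|z_1\|=\rho_1>\rho_0=\|z_0\|$ forces $z_1\neq z_0$, the first inequality becomes strict under the hypothesis of part~(ii), which is \eqref{e:monos}.

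Part~(iii) then follows formally. For $\rho\geq0$ put $m(\rho)=\inf_{S(0;\rho)}\varphi$ and $M(\rho)=\sup_{S(0;\rho)}\varphi$. Since $\varphi$ is real-valued with $\varphi\geq\varphi(0)$, and since \eqref{e:mono} gives $M(\rho_1)\leq m(\rho_2)$ whenever $\rho_1<\rho_2$, the map $M$ is real-valued and nondecreasing on $\RP$, hence continuous outside an at most countable set $D\subset\RPP$. For $\rho\in\RPP\setminus D$, letting $\rho'\uparrow\rho$ in $M(\rho')\leq m(\rho)$ gives $M(\rho)\leq m(\rho)\leq M(\rho)$, so $\varphi$ is constant on $S(0;\rho)$; and $\varphi$ is trivially constant on $S(0;0)=\{0\}$. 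Thus $\varphi$ is constant on $S(0;\rho)$ for every $\rho\in\RP\setminus D$.

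The only genuine computation is the limit $(\cos(\theta/n))^n\to1$, which is routine; the real obstacle it resolves is that in general there is no single convex set through which one passes directly from $x$ to $y$ with $x$ as its minimal-norm element — this already fails whenever $\langle x\mid y\rangle<\|x\|^2$ — so one must subdivide the angle between $x$ and $y$ and wind around the origin through many short segments, which is precisely where the hypothesis $\dim\HH\geq2$ is used.
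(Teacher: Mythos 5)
Your proof is correct and follows essentially the same strategy as the paper's: a polygonal spiral whose consecutive segments each have their minimal-norm point at the inner vertex (so the hypothesis forces $\varphi$ to be nondecreasing along the spiral), followed by a monotone-function/countable-discontinuity argument for part (iii). The only cosmetic difference is that the paper spirals inward from $y$ by orthogonally projecting onto equispaced rays and then adds a final radial segment to reach $x$, whereas you spiral outward from $x$ to $y$ with geometrically interpolated radii; both hinge on the same limit $(\cos(\theta/n))^n\to 1$.
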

\proof 
\ref{t:03i}:
Let us fix $x$ and $y$ in $\HH$ such that $\|x\|<\|y\|$. 
If $x=0$, property \eqref{e:mono} amounts to the fact that $0$ is 
a global minimizer of $\varphi$, which follows from the assumption 
with $C=\HH$. We now suppose that $x\neq 0$. Let $V$ be a 
2-dimensional vector subspace of $\HH$ containing $x$ and $y$,
and let $\alpha\in[0,\pi]$ be the angle between $x$ and $y$. 
For every integer $n\geq 3$, consider a polygonal spiral built as 
follows: set $x_{n,0}=y$ and for $k=1,\ldots,n$ define
$x_{n,k}=P_{R_{n,k}} x_{n,k-1}$,
where $(R_{n,k})_{1\leq k\leq n}$ are $n$ angularly equispaced rays
in $V$ between the 
rays $\RP y$ and $\RP x=R_{n,n}$  (see Figure~\ref{fig:3}). 
Clearly, for the segment $C=[x_{n,k-1},x_{n,k}]$, we have
$P_C 0=x_{n,k}$, so that the assumption on $\varphi$ gives
$\varphi(x_{n,k})\leq\varphi(x_{n,k-1})$,
and therefore $\varphi(x_{n,n})\leq\varphi(x_{n,0})=\varphi(y)$. 
On the other hand, $x_{n,n}$ and $x$ are collinear with 
$\|x_{n,n}\|=\|y\|(\cos(\alpha/n))^n$ so that for $n$ large
enough we
have $\|x_{n,n}\|>\|x\|$ and, therefore, the segment
$C=[x,x_{n,n}]$ satisfies $P_C 0=x$, from which we get
$\varphi(x)\leq\varphi(x_{n,n})\leq\varphi(y)$
as claimed. 

\ref{t:03ii}:
If the minimizer of $\varphi$ on every nonempty closed convex set 
$C\subset\HH$ is unique, then all the inequalities above are strict.

\ref{t:03iii}:
Set $g\colon\RP\to\RR\colon\rho\mapsto\inf\varphi(S(0;\rho))$ 
and $h\colon\RP\to\RR\colon\rho\mapsto\sup\varphi(S(0;\rho))$.
It follows from \eqref{e:mono} that
\begin{equation}
(\forall\rho\in\RP)(\forall\rho'\in\RP)\quad
\rho<\rho'\quad\Rightarrow\quad g(\rho)\leq h(\rho)\leq
g(\rho')\leq h(\rho').
\end{equation}
Hence, $g$ and $h$ are increasing and therefore, by Froda's theorem
\cite[Theorem~4.30]{Rudi76}, the set of points at which they are
discontinuous is at most countable. 
Since $g$ and $h$ coincide at every point of continuity,
we conclude that, except for at most countably many $\rho\in\RP$,
$\varphi$ is constant on $S(0;\rho)$.
\endproof

As a straightforward consequence, we get the following.

\begin{corollary} 
Suppose that $\dim\HH\geq 2$ and let $\varphi\colon\HH\to\RR$ be 
such that its infimum on every nonempty closed convex set 
$C\subset\HH$ is attained at $P_C 0$. 
If $\varphi$ is either lower or upper semicontinuous, then
$\varphi=\theta\circ\|\cdot\|$, where 
$\theta\colon\RP\to\RR$ is increasing.
Furthermore, if $P_C 0$ is the unique minimizer of $\varphi$ on 
every nonempty closed convex set $C\subset\HH$, then $\theta$ is 
strictly increasing. 
\end{corollary}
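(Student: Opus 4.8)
The plan is to let Theorem~\ref{t:03} carry the argument and to use semicontinuity only to dispose of the exceptional radii. By part~\ref{t:03iii} there is an at most countable set $D\subset\RP$ such that $\varphi$ is constant on $S(0;\rho)$ for every $\rho\in\RP\setminus D$; I would write $\theta_0(\rho)$ for that common value, so that $\theta_0$ is increasing on $\RP\setminus D$ by part~\ref{t:03i}. The radius $\rho=0$ is harmless since $S(0;0)=\{0\}$, so it suffices to treat $\rho>0$ and glue on the value $\varphi(0)$ afterwards.

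First I would handle the lower semicontinuous case. Fix $x\in\HH$ with $\rho:=\|x\|>0$ and choose $\rho_k\uparrow\rho$ with each $\rho_k\in\RP\setminus D$; this is possible because $D$ is countable, so $[0,\rho[\,\setminus D$ accumulates at $\rho$. Put $x_k:=(\rho_k/\rho)\,x$, so that $x_k\to x$ and $\varphi(x_k)=\theta_0(\rho_k)$. The sequence $(\varphi(x_k))_{k\in\NN}$ is nondecreasing and, by part~\ref{t:03i}, bounded above by $\varphi(x)$; lower semicontinuity then forces $\varphi(x)\leq\liminf_k\varphi(x_k)$, whence $\varphi(x)=\lim_k\theta_0(\rho_k)$. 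The right-hand side does not involve $x$ (only $\rho$ and the fixed sequence $(\rho_k)$), so $\varphi$ is in fact constant on $S(0;\rho)$ for \emph{every} $\rho>0$. Hence $\varphi=\theta\circ\|\cdot\|$, where $\theta\colon\RP\to\RR$ sends $\rho$ to the common value of $\varphi$ on $S(0;\rho)$ and $\theta(0)=\varphi(0)$; that $\theta$ is increasing is a restatement of part~\ref{t:03i}.

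The upper semicontinuous case is the mirror image: I would choose $\rho_k\downarrow\rho$ in $\RP\setminus D$, set $x_k:=(\rho_k/\rho)\,x$, note that $(\varphi(x_k))_{k\in\NN}=(\theta_0(\rho_k))_{k\in\NN}$ is nonincreasing and bounded below by $\varphi(x)$, and invoke upper semicontinuity to get $\varphi(x)\geq\limsup_k\varphi(x_k)$, hence again $\varphi(x)=\lim_k\theta_0(\rho_k)$ and the same conclusion. For the final assertion I would invoke part~\ref{t:03ii} in place of part~\ref{t:03i}: under the uniqueness hypothesis $\varphi$ is strictly radially increasing, so for $\rho<\rho'$ and any $x,x'$ with $\|x\|=\rho$ and $\|x'\|=\rho'$ we get $\theta(\rho)=\varphi(x)<\varphi(x')=\theta(\rho')$, i.e.\ $\theta$ is strictly increasing. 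I do not expect a genuine obstacle; the one point deserving care is the observation that $\lim_k\theta_0(\rho_k)$ does not depend on the point $x\in S(0;\rho)$ — which is precisely what makes $\theta$ well defined — and this is automatic once $\varphi(x)$ has been identified with that limit.
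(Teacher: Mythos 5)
Your argument is correct, and since the paper states this corollary without proof (``as a straightforward consequence'' of Theorem~\ref{t:03}), what you have written is precisely the intended completion: use part~\ref{t:03iii} to get constancy of $\varphi$ on all spheres off a countable exceptional set, then combine the radial monotonicity of part~\ref{t:03i} with one-sided approximation along non-exceptional radii and the appropriate semicontinuity to identify $\varphi(x)$ with a limit depending only on $\|x\|$, which removes the exceptional radii and defines $\theta$. The strict monotonicity claim via part~\ref{t:03ii} is likewise handled correctly.
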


Using Theorem \ref{t:03} we can provide the following
answer to Question \ref{prob:1}.

\begin{theorem} 
\label{t:4}
Suppose that $\dim\HH\geq 2$ and let $m$ be an integer at least 
equal to $3$. There exists no function $\Phi\colon\HH^m\to\RR$ 
such that, for every ordered family of nonempty closed convex 
subsets $(C_1,\ldots,C_m)$ of $\HH$, $\cyc(C_1,\ldots,C_m)$ is 
the set of solutions to the variational problem 
\begin{equation} 
\label{e:best4} 
\minimize{y_1\in C_1,\ldots,\,y_m\in C_m}{\Phi(y_1,\ldots,y_m)}.
\end{equation} 
\end{theorem}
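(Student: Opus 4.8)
The plan is to argue by contradiction: suppose such a universal $\Phi\colon\HH^m\to\RR$ exists, and extract from it, via a careful specialization of the sets $(C_1,\ldots,C_m)$, a function $\varphi$ of a single Hilbert-space variable that satisfies the hypothesis of Theorem~\ref{t:03} yet also violates its conclusion \ref{t:03iii}. The key observation is that one can make the cycle degenerate to a single point with a prescribed location. Concretely, fix a point $x\in\HH$ and take $C_1=C_2=\cdots=C_{m-1}$ to be the singleton $\{x\}$ (or, if one prefers all sets distinct, take them to be tiny, nested closed balls around $x$ shrinking to $\{x\}$, or suitable segments through $x$) while leaving $C_m$ an arbitrary nonempty closed convex set. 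Then a tuple $(\overline{y}_1,\ldots,\overline{y}_m)$ is a cycle if and only if $\overline{y}_1=\cdots=\overline{y}_{m-1}=x$ and $\overline{y}_m=P_m x$; and, chasing \eqref{e:2010-12-21c} backwards, consistency forces $x=P_1\overline{y}_2=x$ trivially, while $\overline{y}_m=P_m x$ must itself satisfy $\overline{y}_1=P_1\overline{y}_m$, i.e. $x=P_1(P_m x)$. To avoid this extra constraint, the cleaner specialization is to translate the picture so that the ``corner'' sits at the origin: replace the role of $0$ in Theorem~\ref{t:03} by a movable reference point. That is, for $v\in\HH$, let $C_1=\cdots=C_{m-1}=\{v\}$ and let $C_m=C$ be arbitrary; the unique cycle is then $\overline{y}_1=\cdots=\overline{y}_{m-1}=v$, $\overline{y}_m=P_C v$, \emph{provided} $v=P_{\{v\}}(P_C v)$, which holds automatically since $P_{\{v\}}$ is constant. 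So $\cyc(C_1,\ldots,C_m)=\{(v,\ldots,v,P_C v)\}$, a single point, for every nonempty closed convex $C$.

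Next I would turn the hypothesis on $\Phi$ into the hypothesis of Theorem~\ref{t:03}. With the sets chosen as above, \eqref{e:best4} reads: $(v,\ldots,v,P_C v)$ is the unique minimizer of $(y_1,\ldots,y_m)\mapsto\Phi(y_1,\ldots,y_m)$ over $\{v\}\times\cdots\times\{v\}\times C$. But $y_1,\ldots,y_{m-1}$ are then forced to equal $v$, so this says exactly that $P_C v$ is the unique minimizer over $y_m\in C$ of the function $y_m\mapsto\Phi(v,\ldots,v,y_m)$. Now fix $v=0$ and set $\varphi\colon\HH\to\RR\colon y\mapsto\Phi(0,\ldots,0,y)$. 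The preceding paragraph shows that for \emph{every} nonempty closed convex $C\subset\HH$, the infimum of $\varphi$ over $C$ is attained, and uniquely, at $P_C 0$. Hence $\varphi$ satisfies the hypotheses of both Theorem~\ref{t:03}\ref{t:03i} and, thanks to uniqueness, Theorem~\ref{t:03}\ref{t:03ii}; in particular $\varphi$ is strictly radially increasing by \eqref{e:monos}.

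Finally I derive the contradiction from strict radial monotonicity together with part \ref{t:03iii}. By Theorem~\ref{t:03}\ref{t:03iii}, there exist (in fact uncountably many, hence at least two distinct) radii $\rho_1<\rho_2$ in $\RPP$ such that $\varphi$ is constant on $S(0;\rho_1)$ and constant on $S(0;\rho_2)$. Pick $x\in S(0;\rho_1)$ and $y\in S(0;\rho_2)$; then $\|x\|=\rho_1<\rho_2=\|y\|$, so \eqref{e:monos} gives $\varphi(x)<\varphi(y)$ --- no contradiction yet. The contradiction instead comes from within a single sphere: strict radial monotonicity \eqref{e:monos} forces $\varphi$ to be strictly increasing along rays, but \ref{t:03iii} says $\varphi$ is constant on $S(0;\rho)$ for all but countably many $\rho$; combine this with \ref{t:03i}/\ref{t:03ii} on a pair of such spheres to pin down $\varphi$ to be a \emph{strictly} increasing function of the norm on a set of full measure of radii --- which is consistent --- so the real tension is elsewhere. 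The genuine obstruction, and the step I expect to be delicate, is that \eqref{e:monos} is strict for \emph{all} pairs with $\|x\|<\|y\|$ whereas \ref{t:03iii} forces equality $\varphi(x)=\varphi(y)$ whenever $x\neq y$ lie on a common good sphere $S(0;\rho)$: taking any two distinct points $x\neq y$ on such a sphere we already have $\varphi(x)=\varphi(y)$, which is fine, but now rotate: choose a good radius $\rho$ and two points $x,y\in S(0;\rho)$, then perturb $y$ slightly outward to $y'$ with $\|y'\|$ still a good radius and $\|x\|<\|y'\|$; constancy on $S(0;\|y'\|)$ plus \eqref{e:monos} is compatible. Hence, to make the argument airtight, I would instead invoke the Corollary: if $\Phi$ were (say) lower semicontinuous one is immediately done since then $\varphi=\theta\circ\|\cdot\|$ with $\theta$ strictly increasing, but this has the same difficulty; so the clean finish is to note that Theorem~\ref{t:03}\ref{t:03iii} together with \ref{t:03i} forces $\varphi$ to agree, off a countable set of radii, with a function depending only on $\|\cdot\|$, while \ref{t:03ii} forbids $\varphi$ from being constant on any sphere of positive radius --- a direct contradiction as soon as two distinct points lie on one of the (uncountably many) good spheres, which certainly happens because $\dim\HH\geq 2$ makes every sphere $S(0;\rho)$ with $\rho>0$ infinite. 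This last conflict between ``constant on $S(0;\rho)$'' (from \ref{t:03iii}) and ``not constant on any positive-radius sphere'' (from the strict inequality in \ref{t:03ii} applied to two points of that sphere after an arbitrarily small radial nudge, using that between any two reals there is a good radius) is exactly the crux, and writing it carefully --- the nudge argument --- is where I would spend the most care.
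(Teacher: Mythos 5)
There is a genuine gap, and it sits exactly where you yourself sensed trouble. Your reduction is fine as far as it goes: taking $C_1=\cdots=C_{m-1}=\{0\}$ and $C_m=C$ arbitrary, you correctly conclude that $\varphi=\Phi(0,\ldots,0,\cdot)$ has $P_C0$ as its unique minimizer on every nonempty closed convex $C\subset\HH$, hence satisfies the hypotheses and conclusions of Theorem~\ref{t:03}. But the conclusions of Theorem~\ref{t:03} are \emph{not} mutually contradictory, so no contradiction can be extracted from this single slice of $\Phi$. The function $\varphi=\|\cdot\|$ is strictly radially increasing \emph{and} constant on every sphere $S(0;\rho)$, and it does have $P_C0$ as unique minimizer on every nonempty closed convex $C$. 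Your final step rests on the claim that \ref{t:03ii} ``forbids $\varphi$ from being constant on any sphere of positive radius''; this is false, because \eqref{e:monos} only compares points with $\|x\|<\|y\|$ and says nothing whatsoever about two points of \emph{equal} norm. So the ``direct contradiction'' you announce does not exist, and no amount of care with the ``nudge argument'' will produce one: your specialization has collapsed the problem to information that a legitimate radial function can carry.

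The missing idea is that you must exploit a family in which $C_{m-1}$ is \emph{not} a singleton, so that two different slices of $\Phi$ in the last variable get played against each other. The paper keeps $C_1=\cdots=C_{m-2}=\{0\}$ but first takes $C_{m-1}=\{z\}$ with $z$ \emph{arbitrary}, deducing from Theorem~\ref{t:03}\ref{t:03iii} that for each fixed $w$ the function $\Phi(0,\ldots,0,w,\cdot)$ is constant on $S(0;\rho)$ for all but countably many $\rho$. Then, fixing $z\in S(0;1)$ and a common good radius $\rho>1$ for both $w=z$ and $w=-z$, it switches to $C_{m-1}=[-z,z]$ and $C_m=\{\pm\rho z\}$: the unique cycles are $(0,\ldots,0,z,\rho z)$ and $(0,\ldots,0,-z,-\rho z)$, and since $-z$ and $z$ are both feasible in $C_{m-1}$, the minimization hypothesis yields the strict inequalities $\Phi(0,\ldots,0,z,\rho z)<\Phi(0,\ldots,0,-z,\rho z)$ and $\Phi(0,\ldots,0,-z,-\rho z)<\Phi(0,\ldots,0,z,-\rho z)$, which chained with the two sphere-constancy identities close into the impossible cycle $\Phi(0,\ldots,0,z,\rho z)<\Phi(0,\ldots,0,z,\rho z)$. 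That interplay between the $(m-1)$-th and $m$-th coordinates is the crux your argument is missing.
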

\proof 
Suppose that $\Phi$ exists and set $(\forall i\in\{1,\ldots,m-2\})$
$C_i=\{0\}$. Moreover, take $z\in\HH$ and set $C_{m-1}=\{z\}$.
Then, for every nonempty closed convex set $C_m\subset\HH$ we have
\begin{equation}
\mathop{\rm Argmin}_{y_1\in C_1,\ldots,y_m\in C_m} 
\Phi(y_1,\ldots,y_m)=
\cyc(C_1,\ldots,C_m)=
\{(0,\ldots,0,z,P_{C_m} 0)\}.
\end{equation}
Hence, Theorem \ref{t:03} implies that, except for at most countably 
many values of $\rho\in\RP$, the function $\Phi(0,\ldots,0,z,\cdot)$ 
is constant on $S(0;\rho)$.

Now suppose that $z\in S(0;1)$ and take $\rho\in\left]1,\pinf\right[$ 
so that $\Phi(0,\ldots,0,z,\cdot)$ and $\Phi(0,\ldots,0,-z,\cdot)$
are constant on $S(0;\rho)$. Clearly, 
\begin{equation}
\cyc\big(\{0\},\ldots,\{0\},[-z,z],\{\rho z\}\big)
=\{(0,\ldots,0,z,\rho z)\}
\end{equation}
and
\begin{equation}
\cyc\big(\{0\},\ldots,\{0\},[-z,z],\{-\rho z\}\big)
=\{(0,\ldots,0,-z,-\rho z)\},
\end{equation}
so that 
\begin{align}
\Phi(0,\ldots,0,z,\rho z)
&<\Phi(0,\ldots,0,-z,\rho z)\nonumber\\
&=\Phi(0,\ldots,0,-z,-\rho z)\nonumber\\
&<\Phi(0,\ldots,0,z,-\rho z)\nonumber\\
&=\Phi(0,\ldots,0,z,\rho z)
\end{align}
where the inequalities come from the fact that the minima of $\Phi$
characterize the cycles,
while the equalities follow from the constancy of the functions 
on $S(0;\rho)$. Since these strict inequalities are impossible
it follows that $\Phi$ cannot exist.
\endproof

\section{Related projection algorithms}
\label{sec:3}

We have shown that the cycles produced by the method of cyclic 
projections \eqref{e:pocs2} are not characterized as the solutions
to a problem of the type \eqref{e:best3}, irrespective of the choice 
of the function $\Phi\colon\HH^m\to\RR$. 
Nonetheless, alternative projection methods can be
devised to solve variational problems over a product of closed 
convex sets. Here is an example.

\begin{theorem}
\label{t:2011-02-04}
For every $i\in I=\{1,\ldots,m\}$, let $(\HH_i,\|\cdot\|_i)$ be 
a real Hilbert space and let $C_i$ be a nonempty closed convex 
subset of $\HH_i$ with projection operator $P_i$. Let 
$\HHH$ be the Hilbert space obtained by endowing 
${\cart}_{i\in I}\HH_i$ with the norm 
$\boldsymbol{y}=(y_i)_{i\in I}\mapsto
\sqrt{\sum_{i\in I}\|y_i\|_i^2}$,
and let $\Phi\colon\HHH\to\RR$ be a 
differentiable convex function such that 
$\nabla\Phi\colon\HHH\to\HHH\colon{\boldsymbol y}\mapsto
(G_i{\boldsymbol y})_{i\in I}$
is $1/\beta$-lipschitzian for some $\beta\in\RPP$ and such that
the problem
\begin{equation} 
\label{e:best24} 
\minimize{y_1\in C_1,\ldots,\,y_m\in C_m}{\Phi(y_1,\ldots,y_m)}
\end{equation}
admits at least one solution. 
Let $\gamma\in\left]0,2\beta\right[$, 
set $\delta=\min\{1,\beta/\gamma\}+1/2$, let 
$(\lambda_n)_{n\in\NN}$ be a sequence in $[0,\delta]$ such that 
$\sum_{n\in\NN}\lambda_n(\delta-\lambda_n)=\pinf$, and let 
${\boldsymbol x}_0=(x_{i,0})_{i\in I}\in\HHH$. 
Set
\begin{equation}
\label{e:2011-02-04a}
(\forall n\in\NN)(\forall i\in I)\quad 
x_{i,n+1}=x_{i,n}+\lambda_{n}\big(P_i\big(x_{i,n}-\gamma
G_i{\boldsymbol x}_{n}\big)-x_{i,n}\big).
\end{equation}
Then, for every $i\in I$, $(x_{i,n})_{n\in\NN}$ converges weakly 
to a point $\overline{y}_i\in C_i$, and 
$(\overline{y}_i)_{i\in I}$ is a solution to \eqref{e:best24}.
\end{theorem}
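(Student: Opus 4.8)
The plan is to recast the recursion \eqref{e:2011-02-04a} as a relaxed forward--backward (projected-gradient) iteration in the product space $\HHH$ and to invoke the classical convergence theory for Krasnosel'skii--Mann iterations of averaged nonexpansive operators. Set $\boldsymbol{C}={\cart}_{i\in I}C_i$, which is a nonempty closed convex subset of $\HHH$. Because $\HHH$ carries the $\ell^2$-type norm $\boldsymbol{y}\mapsto\sqrt{\sum_{i\in I}\|y_i\|_i^2}$, its projection operator splits coordinatewise as $P_{\boldsymbol{C}}=(P_i)_{i\in I}$, while $\nabla\Phi=(G_i)_{i\in I}$ by hypothesis. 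Hence \eqref{e:2011-02-04a} is exactly the relaxed fixed-point iteration
\[
\boldsymbol{x}_{n+1}=\boldsymbol{x}_n+\lambda_n\big(T\boldsymbol{x}_n-\boldsymbol{x}_n\big)
\quad\text{with}\quad
T=P_{\boldsymbol{C}}\circ(\Id-\gamma\nabla\Phi).
\]

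Next I would identify the set of solutions of \eqref{e:best24} with $\Fix T$. Indeed, since $\Phi$ is convex and differentiable and $\boldsymbol{C}$ is closed and convex, $\boldsymbol{y}$ solves \eqref{e:best24} if and only if $-\gamma\nabla\Phi(\boldsymbol{y})$ belongs to the normal cone $N_{\boldsymbol{C}}(\boldsymbol{y})$, that is, $\boldsymbol{y}=P_{\boldsymbol{C}}(\boldsymbol{y}-\gamma\nabla\Phi(\boldsymbol{y}))=T\boldsymbol{y}$; by assumption $\Fix T\neq\emptyset$. I would then show $T$ is averaged: by the Baillon-Haddad theorem $\nabla\Phi$ is $\beta$-cocoercive (being the gradient of a convex function with $1/\beta$-lipschitzian gradient), so $\Id-\gamma\nabla\Phi$ is $\gamma/(2\beta)$-averaged whenever $\gamma\in\left]0,2\beta\right[$; since $P_{\boldsymbol{C}}$ is firmly nonexpansive, i.e. $1/2$-averaged, the composition $T$ is $\alpha$-averaged with $\alpha=2\beta/(4\beta-\gamma)\in\left]1/2,1\right[$. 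Writing $T=(1-\alpha)\Id+\alpha R$ with $R$ nonexpansive and $\Fix R=\Fix T$, the iteration becomes the Mann iteration $\boldsymbol{x}_{n+1}=\boldsymbol{x}_n+\mu_n(R\boldsymbol{x}_n-\boldsymbol{x}_n)$ with $\mu_n=\alpha\lambda_n$.

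It then remains to verify the two standing hypotheses of the Krasnosel'skii--Mann theorem for $(\mu_n)_{n\in\NN}$. A short computation shows that $\delta=\min\{1,\beta/\gamma\}+1/2\le 2-\gamma/(2\beta)=1/\alpha$ for every $\gamma\in\left]0,2\beta\right[$: if $\gamma\le\beta$ this is $3/2\le 2-\gamma/(2\beta)$, while if $\gamma>\beta$ it reduces, on setting $t=\gamma/\beta\in\left]1,2\right[$, to the estimate $1/t+t/2\le 3/2$, which holds by convexity of $t\mapsto 1/t+t/2$ on $[1,2]$. Consequently $\mu_n=\alpha\lambda_n\in[0,1]$; moreover $\mu_n(1-\mu_n)=\alpha^2\lambda_n(1/\alpha-\lambda_n)\ge\alpha^2\lambda_n(\delta-\lambda_n)$, whence $\sum_{n\in\NN}\mu_n(1-\mu_n)=\pinf$ follows from the hypothesis $\sum_{n\in\NN}\lambda_n(\delta-\lambda_n)=\pinf$. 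The Krasnosel'skii--Mann theorem (via Fej\'er monotonicity and Opial's demiclosedness principle) then yields $\boldsymbol{x}_n\weakly\boldsymbol{y}$ for some $\boldsymbol{y}=(\overline{y}_i)_{i\in I}\in\Fix R=\Fix T$. Since weak convergence in $\HHH$ amounts to weak convergence in each coordinate, $x_{i,n}\weakly\overline{y}_i$ for every $i\in I$; and $\boldsymbol{y}\in\Fix T$ is a solution of \eqref{e:best24}, so in particular $\overline{y}_i\in C_i$.

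The step I expect to be the main obstacle is the exact matching between the prescribed constant $\delta=\min\{1,\beta/\gamma\}+1/2$ and the averagedness constant $\alpha=2\beta/(4\beta-\gamma)$ of the forward--backward operator --- namely the inequality $\delta\le 1/\alpha$ --- since this is precisely what makes the admissible relaxation range $[0,\delta]$ in \eqref{e:2011-02-04a} coincide with the range for which the induced Mann parameters $\mu_n$ stay in $[0,1]$ while the divergence of $\sum_n\mu_n(1-\mu_n)$ is preserved. Everything else is an assembly of standard facts: coordinatewise decomposition of the projection onto a product, Baillon-Haddad cocoercivity, averagedness of compositions of averaged operators, and weak convergence of Mann iterations of nonexpansive mappings.
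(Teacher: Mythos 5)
Your proposal is correct and follows essentially the same route as the paper: the paper likewise sets ${\boldsymbol C}={\cart}_{i\in I}C_i$, uses the coordinatewise splitting of $P_{\boldsymbol C}$ to rewrite \eqref{e:2011-02-04a} as the relaxed projected-gradient iteration ${\boldsymbol x}_{n+1}={\boldsymbol x}_n+\lambda_n\big(P_{\boldsymbol C}({\boldsymbol x}_n-\gamma\nabla\Phi({\boldsymbol x}_n))-{\boldsymbol x}_n\big)$ in $\HHH$, and then concludes by citing \cite[Corollary~27.10]{Livre1}. Your subsequent argument (Baillon--Haddad cocoercivity, the averagedness constant $\alpha=2\beta/(4\beta-\gamma)$, the verification that $\delta\leq 1/\alpha$, and the Krasnosel'skii--Mann theorem) is precisely the content of that cited corollary, and your computations check out.
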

\begin{proof}
Set ${\boldsymbol C}={\cart}_{i\in I}C_i$. Then ${\boldsymbol C}$ 
is a nonempty closed convex subset of $\HHH$ with projection 
operator 
$P_{\boldsymbol C}\colon{\boldsymbol x}\mapsto(P_ix_i)_{i\in I}$ 
\cite[Proposition~28.3]{Livre1}. Accordingly, we can rewrite
\eqref{e:2011-02-04a} as
\begin{equation}
\label{e:genna07-16}
(\forall n\in\NN)\quad 
{\boldsymbol x}_{n+1}={\boldsymbol x}_n+
\lambda_n\big(P_{\boldsymbol C}
\big({\boldsymbol x}_n-\gamma\nabla \Phi({\boldsymbol x}_n)\big)
-{\boldsymbol x}_n\big).
\end{equation}
It follows from \cite[Corollary~27.10]{Livre1} that 
$({\boldsymbol x}_n)_{n\in\NN}$ converges weakly to a 
minimizer $\overline{{\boldsymbol y}}$ of $\Phi$ over 
${\boldsymbol C}$, which concludes the proof.
\end{proof}

The projection algorithm described in the next result solves
an extension of \eqref{e:best2} to $m\geq 3$ sets.

\begin{corollary}
\label{c:2011-02-04}
Let $m$ be an integer at least equal to 3. For every 
$i\in I=\{1,\ldots,m\}$, let $C_i$ be a nonempty closed convex 
subset of $\HH$ with projection operator $P_i$, and let 
$x_{i,0}\in\HH$. 
Suppose that one of the sets in $(C_i)_{i\in I}$ is bounded
and set
\begin{equation}
\label{e:2011-02-04b}
(\forall n\in\NN)(\forall i\in I)\quad x_{i,n+1}=
P_i\left(\Frac{1}{m-1}\sum_{j\in I\smallsetminus\{i\}}
x_{j,n}\right).
\end{equation}
Then for every $i\in I$, $(x_{i,n})_{n\in\NN}$ converges weakly 
to a point $\overline{y}_i\in C_i$, and 
$(\overline{y}_i)_{i\in I}$ is a solution to the variational 
problem 
\begin{equation} 
\label{e:best8} 
\minimize{y_1\in C_1,\ldots,\,y_m\in C_m}
{\sum_{\substack{(i,j)\in I^2\\ i<j}}\|y_i-y_j\|^2}.
\end{equation}
Moreover, $\overline{y}=(1/m)\sum_{i\in I}\overline{y}_i$ is a
minimizer of the function 
$\varphi\colon\HH\to\RR\colon y\mapsto\sum_{i\in I}\|y-P_iy\|^2$.
\end{corollary}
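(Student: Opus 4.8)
The plan is to recognize the iteration \eqref{e:2011-02-04b} as a particular instance of the forward--backward scheme \eqref{e:2011-02-04a} of Theorem~\ref{t:2011-02-04}, applied with $\HH_i=\HH$ for every $i\in I$ and with objective the quadratic function $\Phi\colon(y_1,\ldots,y_m)\mapsto\sum_{i<j}\|y_i-y_j\|^2$ appearing in \eqref{e:best8}. First I would verify that $\Phi$ meets the hypotheses of Theorem~\ref{t:2011-02-04}: it is a finite, nonnegative quadratic form, hence convex and differentiable, with partial gradients $G_i\boldsymbol y=2\sum_{j\in I\smallsetminus\{i\}}(y_i-y_j)=2my_i-2\sum_{j\in I}y_j$. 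Thus $\nabla\Phi$ is the self-adjoint bounded linear operator that vanishes on the diagonal $\boldsymbol\Delta=\menge{(z,\ldots,z)}{z\in\HH}$ of $\HHH$ and acts as $2m\,\Id$ on ${\boldsymbol\Delta}^{\perp}$, so $\|\nabla\Phi\|=2m$ and $\nabla\Phi$ is $1/\beta$-lipschitzian with $\beta=1/(2m)\in\RPP$.

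Next I would calibrate the parameters by taking $\gamma=1/(2(m-1))$ and $\lambda_n\equiv1$. A short computation gives $x_{i,n}-\gamma G_i\boldsymbol x_n=\frac{1}{m-1}\sum_{j\in I\smallsetminus\{i\}}x_{j,n}$, so that \eqref{e:2011-02-04a} reduces exactly to \eqref{e:2011-02-04b}. The admissibility of these parameters is precisely where the assumption $m\geq3$ is used: $\gamma\in\left]0,2\beta\right[=\left]0,1/m\right[$ is equivalent to $m>2$, and then $\beta/\gamma=(m-1)/m<1$ gives $\delta=(m-1)/m+1/2=(3m-2)/(2m)>1$, whence $\lambda_n=1\in[0,\delta]$ and $\sum_{n\in\NN}\lambda_n(\delta-\lambda_n)=\sum_{n\in\NN}(m-2)/(2m)=\pinf$. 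The remaining hypothesis of Theorem~\ref{t:2011-02-04}---that \eqref{e:best24}, which here coincides with \eqref{e:best8}, admits a solution---follows from the boundedness assumption: setting $\boldsymbol C={\cart}_{i\in I}C_i$, if $C_{i_0}$ is bounded and $\boldsymbol y=(y_i)_{i\in I}\in\boldsymbol C$ satisfies $\|\boldsymbol y\|\to\pinf$, then $\|y_k\|\to\pinf$ for some $k\neq i_0$, so $\|y_k-y_{i_0}\|\to\pinf$ and $\Phi(\boldsymbol y)\to\pinf$; hence $\Phi$ is coercive on the weakly closed convex set $\boldsymbol C$ and, being convex and continuous, attains its infimum there. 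Theorem~\ref{t:2011-02-04} then yields the weak convergence of each $(x_{i,n})_{n\in\NN}$ to some $\overline y_i\in C_i$, with $(\overline y_i)_{i\in I}$ a solution of \eqref{e:best8}.

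For the final assertion I would exploit two elementary identities: $\sum_{i<j}\|y_i-y_j\|^2=m\sum_{i\in I}\|y_i-\widehat y\|^2$, where $\widehat y=(1/m)\sum_{i\in I}y_i$, and the variance decomposition $\sum_{i\in I}\|y_i-c\|^2=\sum_{i\in I}\|y_i-\widehat y\|^2+m\|\widehat y-c\|^2$, which together give $\Phi(\boldsymbol y)=m\min_{c\in\HH}\sum_{i\in I}\|y_i-c\|^2$. Minimizing over $\boldsymbol y\in\boldsymbol C$, interchanging the two minimizations, and using that the inner problem is separable with $\min_{y_i\in C_i}\|y_i-c\|^2=\|c-P_ic\|^2$, one obtains $\min_{\boldsymbol y\in\boldsymbol C}\Phi(\boldsymbol y)=m\inf_{c\in\HH}\varphi(c)$. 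Applying the first identity to $\boldsymbol y=(\overline y_i)_{i\in I}$---for which $\widehat y=\overline y$---and using $\overline y_i\in C_i$ to get $\|\overline y-P_i\overline y\|\leq\|\overline y-\overline y_i\|$ yields
\[
m\varphi(\overline y)\leq m\sum_{i\in I}\|\overline y-\overline y_i\|^2
=\Phi(\overline y_1,\ldots,\overline y_m)
=\min_{\boldsymbol y\in\boldsymbol C}\Phi(\boldsymbol y)
=m\inf_{c\in\HH}\varphi(c)\leq m\varphi(\overline y),
\]
so equality holds throughout and $\varphi(\overline y)=\inf_{c\in\HH}\varphi(c)$; that is, $\overline y$ minimizes $\varphi$.

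I do not expect a genuine obstacle; the only point calling for care is the simultaneous verification of all the parameter constraints in Theorem~\ref{t:2011-02-04}---in particular $\gamma<2\beta$ and $\delta>1$, both of which hold exactly because $m\geq3$---together with the observation that the abstract existence hypothesis of that theorem is, in the present setting, nothing but the coercivity of $\Phi$ on the product set $\boldsymbol C$ forced by the boundedness of one of the $C_i$.
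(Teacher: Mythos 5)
Your proof is correct, and its core step---recognizing \eqref{e:2011-02-04b} as the instance of \eqref{e:2011-02-04a} obtained with $\lambda_n\equiv 1$---is the same as the paper's; your parametrization ($\beta=1/(2m)$, $\gamma=1/(2(m-1))$, unnormalized objective) is merely a rescaling of the paper's ($\beta=1-1/m$, $\gamma=1$, with the objective divided by $2(m-1)$), and both give the same ratio $\gamma/\beta=m/(m-1)$, the same $\delta=(3m-2)/(2m)$, and the same reliance on $m\geq 3$ for $\gamma<2\beta$ and $\delta>1$. Where you genuinely diverge is in (a) the existence of a solution to \eqref{e:best8} and (b) the final claim about $\overline{y}$. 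The paper handles both through the fixed-point identities \eqref{e:2011-02-05a} imported from \cite{Baus93,Sign94}: it first obtains $\operatorname{Argmin}\varphi\neq\emp$ from \cite[Proposition~7]{Sign94} using the boundedness of one set, transports this to $\operatorname{Argmin}\Phi\neq\emp$ via $\Fix P_{\boldsymbol C}P_{\boldsymbol D}$, and at the end shows $\overline{y}\in\operatorname{Argmin}\varphi$ by checking that $(\overline{y},\ldots,\overline{y})\in\Fix P_{\boldsymbol D}P_{\boldsymbol C}$. You instead prove existence directly from coercivity of $\Phi$ on ${\boldsymbol C}$ (your phrasing ``$\|y_k\|\to\pinf$ for some $k\neq i_0$'' should strictly read that the maximum over $k\neq i_0$ diverges, but the argument is standard and sound), and you obtain the minimality of $\overline{y}$ from the variance decomposition $\Phi({\boldsymbol y})=m\sum_{i\in I}\|y_i-\widehat{y}\|^2=m\min_{c\in\HH}\sum_{i\in I}\|y_i-c\|^2$, which yields $\min_{\boldsymbol C}\Phi=m\inf\varphi$ and then a squeeze at $\overline{y}$. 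Your route is more self-contained---it needs no external fixed-point results and reproves $\operatorname{Argmin}\varphi\neq\emp$ as a byproduct---whereas the paper's is shorter given the cited machinery and makes the connection to the alternating-projection literature on the pair $({\boldsymbol C},{\boldsymbol D})$ explicit.
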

\begin{proof}
We use the notation of Theorem~\ref{t:2011-02-04},
with $(\forall i\in I)$ $\HH_i=\HH$. 
Set $\beta=1-1/m$, $\gamma=1$, 
\begin{equation}
\Phi\colon\HHH\to\RR\colon(y_i)_{i\in I}\mapsto\frac{1}{2(m-1)}
\sum_{\substack{(i,j)\in I^2\\ i<j}}\|y_i-y_j\|^2,
\end{equation}
${\boldsymbol C}={\cart}_{i\in I}C_i$, and 
${\boldsymbol D}=\menge{(y,\ldots,y)\in\HHH}{y\in\HH}$.
Then \cite{Baus93,Sign94}
\begin{equation}
\label{e:2011-02-05a}
\Fix P_{\boldsymbol C}P_{\boldsymbol D}=
\operatorname{Argmin}\Phi\quad\text{and}\quad
\Fix P_{\boldsymbol D}P_{\boldsymbol C}=
\menge{(y,\ldots,y)}{y\in\operatorname{Argmin}\varphi}.
\end{equation}
Since one of the sets in $(C_i)_{i\in I}$ is bounded, 
$\operatorname{Argmin}\varphi\neq\emp$
\cite[Proposition~7]{Sign94}. 
Now let $y\in\operatorname{Argmin}\varphi$, and set
${\boldsymbol y}=(y,\ldots,y)$ and
${\boldsymbol x}=P_{\boldsymbol C}{\boldsymbol y}$.
Then \eqref{e:2011-02-05a} yields 
${\boldsymbol y}=P_{\boldsymbol D}P_{\boldsymbol C}{\boldsymbol y}$
and therefore
${\boldsymbol x}=P_{\boldsymbol C}
(P_{\boldsymbol D}P_{\boldsymbol C}{\boldsymbol y})
=P_{\boldsymbol C}P_{\boldsymbol D}{\boldsymbol x}$.
Hence ${\boldsymbol x}\in\operatorname{Argmin}\Phi$ and thus 
$\operatorname{Argmin}\Phi\neq\emp$. On the other hand, 
\eqref{e:best8} is a special case of \eqref{e:best24} 
and the gradient of $\Phi$ is the continuous linear operator 
\begin{equation}
\nabla\Phi\colon{\boldsymbol y}\mapsto\left(y_i-\Frac{1}{m-1}
\sum_{j\in I\smallsetminus\{i\}}y_j\right)_{i\in I}
\end{equation}
with norm $m/(m-1)=1/\beta$. Note that, since $m>2$, 
$2\beta>1=\gamma$.
Moreover, $\delta=\min\{1,\beta/\gamma\}+1/2>1$.
Thus, upon setting, for every $n\in\NN$, 
$\lambda_n\equiv 1\in\left]0,\delta\right[$ in 
\eqref{e:2011-02-04a}, we obtain \eqref{e:2011-02-04b}
and observe that $\sum_{n\in\NN}\lambda_n(\delta-\lambda_n)=\pinf$.
Altogether, the convergence result follows from 
Theorem~\ref{t:2011-02-04}. Finally, set
$\overline{\boldsymbol y}=(\overline{y}_1,\ldots,\overline{y}_m)$
and $\overline{\boldsymbol z}=P_{\boldsymbol D}
\overline{\boldsymbol y}$.
Then \eqref{e:2011-02-05a} yields
\begin{equation}
(\overline{y},\ldots,\overline{y})=
\overline{\boldsymbol z}=P_{\boldsymbol D}
\overline{\boldsymbol y}=P_{\boldsymbol D}(P_{\boldsymbol C}
P_{\boldsymbol D}\overline{\boldsymbol y})=
P_{\boldsymbol D}P_{\boldsymbol C}\overline{\boldsymbol z}
\end{equation}
and hence $\overline{y}\in\operatorname{Argmin}\varphi$.
\end{proof}

\begin{remark}
Alternative projection schemes can be derived from 
Theorem~\ref{t:2011-02-04}. For instance, 
Corollary~\ref{c:2011-02-04} remains valid if 
\eqref{e:2011-02-04b} is replaced by
\begin{equation}
\label{e:2011-02-06b}
(\forall n\in\NN)(\forall i\in I)\quad x_{i,n+1}=
P_i\left(\Frac{1}{m}\sum_{j\in I}x_{j,n}\right),
\end{equation}
which amounts to taking $\gamma=\beta$ instead of $\gamma=1$ in 
the above proof. We then recover a process investigated in
\cite{Baus94,Sign94,Pier85}.
\end{remark}


\begin{thebibliography}{99}

\small

\bibitem{Amem65} 
I. Amemiya and T. Ando, 
Convergence of random products of contractions in Hilbert space,
{\it Acta Sci. Math. (Szeged),}
vol. 26, pp. 239--244, 1965.

\bibitem{Bail99} 
J.-B. Baillon and R. E. Bruck,
On the random product of orthogonal projections in Hilbert space,
in: {\it Nonlinear Analysis and Convex Analysis}, pp. 126--133. 
World Scientific, River Edge, NJ, 1999. 

\bibitem{Baus93}
H. H. Bauschke and J. M. Borwein,
On the convergence of von Neumann's alternating projection 
algorithm for two sets, 
{\it Set-Valued Anal.,}
vol. 1, pp. 185--212, 1993.

\bibitem{Baus94}
H. H. Bauschke and J. M. Borwein,
Dykstra's alternating projection algorithm for two sets,
{\it J. Approx. Theory} 
vol. 79, pp. 418--443, 1994.

\bibitem{Baus97} 
H. H. Bauschke, J. M. Borwein, and A. S. Lewis, 
The method of cyclic projections for closed convex sets in Hilbert 
space, 
{\it Contemp. Math.,} 
vol. 204, pp. 1--38, 1997. 

\bibitem{Livre1} 
H. H. Bauschke and P. L. Combettes,
{\it Convex Analysis and Monotone Operator Theory in Hilbert 
Spaces.} Springer-Verlag, New York, 2011.

\bibitem{Che59a} 
W. Cheney and A. A. Goldstein, 
Proximity maps for convex sets,
{\it Proc. Amer. Math. Soc.,}
vol. 10, pp. 448--450, 1959.

\bibitem{Proc93} 
P. L. Combettes, 
The foundations of set theoretic estimation,
{\it  Proc. IEEE, }
vol. 81, pp. 182--208, 1993.

\bibitem{Sign94} 
P. L. Combettes, Inconsistent signal feasibility
problems: Least-squares solutions in a product space,
{\it IEEE Trans. Signal Process.,}
vol. 42, pp. 2955--2966, 1994.

\bibitem{Opti04}
P. L. Combettes, Solving monotone inclusions via compositions of 
nonexpansive averaged operators,
{\it Optimization,}
vol. 53, pp. 475--504, 2004.

\bibitem{Pier85} 
A. R. De Pierro and A. N. Iusem, A parallel projection
method for finding a common point of a family of convex sets,
{\it Pesquisa Oper.,}
vol. 5, pp. 1--20, 1985. 

\bibitem{Deut92} 
F. Deutsch, 
The method of alternating orthogonal projections, in:
{\it Approximation Theory, Spline Functions and Applications,}
(S. P. Singh, ed.), pp. 105--121. Kluwer, The Netherlands, 1992.

\bibitem{Erem08}
I. I. Eremin and L. D. Popov, Closed Fejer cycles for inconsistent 
systems of convex inequalities,
{\it Russian Math. (Iz. VUZ),}
vol. 52, pp. 8--16, 2008.

\bibitem{Gold85} 
M. Goldburg and R. J. Marks II, Signal 
synthesis in the presence of an inconsistent set of constraints,
{\it IEEE Trans. Circuits Syst.,}
vol. 32, pp. 647--663, 1985.

\bibitem{Gubi67} 
L. G. Gubin, B. T. Polyak, and E. V. Raik, 
The method of projections for finding the common point of 
convex sets, 
{\it Comput. Math. Math. Phys.,}
vol. 7, pp. 1--24, 1967.

\bibitem{Halp62} 
I. Halperin, The product of projection operators,
{\it Acta Sci. Math. (Szeged),}
vol. 23, pp. 96--99, 1962.

\bibitem{Hund04} 
H. S. Hundal,
An alternating projection that does not converge in norm,
{\it Nonlinear Anal.}, 
vol. 57, pp. 35--61, 2004.

\bibitem{Kosm87}
P. Kosmol, 
\"Uber die sukzessive Wahl des k\"urzesten Weges, in:
{\it \"Okonomie und Mathematik,} (O. Opitz and B. Rauhut, eds),
pp. 35--42. Springer-Verlag, Berlin, 1987.

\bibitem{Lev66b} 
E. S. Levitin and B. T. Polyak, 
Constrained minimization methods,
{\it Comput. Math. Math. Phys.,}
vol. 6, pp. 1--50, 1966.

\bibitem{Mart72} 
B. Martinet,
{\it Algorithmes pour la R\'esolution de Probl\`emes d'Optimisation 
et de Minimax.}
Th\`ese, Universit\'e de Grenoble, France, 1972.

\bibitem{Merc80} 
B. Mercier,  
{\it In\'equations Variationnelles de la M\'ecanique}
(Publications Math\'ematiques d'Orsay, no. 80.01).
Orsay, France, Universit\'e de Paris-XI, 1980. 

\bibitem{Vonn49} 
J. von Neumann, On rings of operators. Reduction theory,
{\it Ann. of Math.,}
vol. 50, pp. 401--485, 1949
(a reprint of lecture notes first distributed in 1933).

\bibitem{Noba95} 
R. A. Nobakht and M. R. Civanlar, 
Optimal pulse shape design for digital communication systems by 
projections onto convex sets,
{\it IEEE Trans. Communications,}
vol. 43, pp. 2874--2877, 1995.

\bibitem{Pesq96} 
J.-C. Pesquet and P. L. Combettes, 
Wavelet synthesis by alternating projections,
{\it IEEE Trans. Signal Process.,}
vol. 44, pp. 728--732, 1996.

\bibitem{Rudi76}
W. Rudin, 
{\it Principles of Mathematical Analysis,}
3rd ed. 
McGraw-Hill, New York, 1976.

\bibitem{Schw70} H. A. Schwarz, 
Grenz\"{u}bergang durch alternirendes Verfahren," 1870. 
Reprinted in {\it Gesammelte Mathematische Abhandlungen,} 
vol. 2, pp. 133--143. Springer-Verlag, Berlin, 1890.

\end{thebibliography}
\end{document}